\documentclass[12pt]{amsart}
\usepackage{amssymb,verbatim,enumerate,ifthen}
\usepackage{esint}
\usepackage[mathscr]{eucal}
\usepackage[utf8]{inputenc}
\usepackage[T1]{fontenc}
\textheight 21.3cm
\textwidth 13cm

\makeatletter
\@namedef{subjclassname@2010}{%
  \textup{2020} Mathematics Subject Classification}
\makeatother


\newtheorem{thm}{Theorem}
\newtheorem{cor}{Corollary}
\newtheorem{prop}{Proposition}
\newtheorem{lem}{Lemma}







\newcommand\R{\mathbb{R}}
\newcommand\N{\mathbb{N}}

\newcommand\MM{\mathbf{M}}
\newcommand{\abs}[1]{\left| #1 \right| }

\author[P. Pasteczka]{Pawe\l{} Pasteczka}
\address{Institute of Mathematics \\ Pedagogical University of Krak\'ow \\ Podchor\k{a}\.zych str. 2, 30-084 Krak\'ow, Poland}
\email{pawel.pasteczka@up.krakow.pl}

\subjclass[2010]{26E60, 39B12, 39B22}
    
\keywords{invariant mean, noncontinuous mean, Gaussian product, mean-type mapping, unique continuous solution}


\newcommand{\operator}[1]{\mathop{\vphantom{\sum}\mathchoice
{\vcenter{\hbox{\LARGE $#1$}}}
{\vcenter{\hbox{\Large $#1$}}}{#1}{#1}}\displaylimits}

\def\Mst_#1^#2{\operator{\mathscr{M}_{\mbox{\scriptsize$\#$}}\!\!}_{#1}^{#2}\,\,}


\def\eq#1{{\rm(\ref{#1})}}
\def\Eq#1#2{\ifthenelse{\equal{#1}{*}}
  {\begin{equation*}\begin{aligned}[]#2\end{aligned}\end{equation*}}
  {\begin{equation}\begin{aligned}[]\label{#1}#2\end{aligned}\end{equation}}}

\title{There is at most one continuous invariant mean}
\begin{document}

\begin{abstract}
We show that, for a (not necessarily continuous) weakly contractive mean-type mapping $\mathbf{M} \colon I^p\to I^p$ (where $I$ is an interval and $p \in \mathbb{N}$), the functional equation 
$K \circ \mathbf{M}=K$ has at most one solution in the family of continuous means $K \colon I^p \to I$. 

 Some general approach to the latter equation is also given.
\end{abstract}

\maketitle
\section{Introduction}

Celebrated result by Borwein-Borwein \cite[Theorem~8.8]{BorBor87} states that for every continuous, contractive mean-type mapping $\MM \colon I^p \to I^p$ (here and below $I$ is an  arbitrary subinterval of reals) there exists exactly one \mbox{$\MM$-invariant} mean. Moreover this mean is also continuous. 

In few recent studies by Pasteczka \cite{Pas19a} and Matkowski-Pasteczka \cite{MatPas20a,MatPas21} it was proved that if $\MM$ is not continuous then there also exist an \mbox{$\MM$-invariant} mean, but it is no longer uniquely determined. On the other hand, it was proved that in a narrow case $p=2$ every contractive mean-type mapping $\MM \colon I^2\to I$ has at most one continuous $\MM$-invariant mean (see \cite{Pas19a}). We generalize this result to all $p \ge 2$ and relax contractivity assumption.

\subsection*{Invariance principle} 
Recall that a function $M:I^{p}\rightarrow I$ is called a mean on $I$ if it is internal,
that is if%
\begin{equation*}
\min \left( v_{1},...,v_{p}\right) \leq M\left( v_{1},...,v_{p}\right) \leq
\max \left( v_{1},...,v_{p}\right) \text{, \ \ \ \ \ }v_{1},...,v_{p}\in I\,%
\text{, }
\end{equation*}%
or, briefly, if%
\begin{equation*}
\min v\leq M\left( v\right) \leq \max v\text{, \ \ \ \ \ }v\in I^{p}\text{.}%
\,\text{ }
\end{equation*}

In the sequel, to avoid the trivial results, we assume that $p>1$.

A mapping $\mathbf{M}\colon I^{p}\rightarrow I^{p}$ is referred to as 
\emph{mean-type} if there exists some means $M_{i}\colon I^{p}\rightarrow I$%
, $i=1,\dots ,p$, such that $\mathbf{M}=(M_{1},\dots ,M_{p})$.

We say that a function $F\colon I^{p}\rightarrow \mathbb{R}$ is invariant
with respect to $\mathbf{M}$ (briefly $\mathbf{M}$-invariant), if $F\circ 
\mathbf{M}=F$. Furthermore, a mean-type mapping $\mathbf{M}\colon I^{p}\rightarrow I^{p}$ is called
\emph{contractive} provided 
\begin{equation*}
\max (\mathbf{M}(v))-\min (\mathbf{M}(v))<\max (v)-\min (v)
\end{equation*}
for every nonconstant vector $v\in I^{p}$.

\section{Results}

Let us begin with the purely topological auxiliary result which turns out to be the key tool in the proof of main theorem. Remarkably, it does not contain means in its wording.
\begin{lem}\label{lem:const}
 Let $D$ be a Hausdorff, $\sigma$-compact topological space and $F \colon D \to [0,\infty)$ be a continuous function. 
 
 If $T \colon [0,\infty) \to 2^D$ is nondecreasing, right-continuous (we consider topological limit on $2^D$) and such that 
 \begin{itemize}
\item each $T(x)$ is closed;
 \item $\vec F \circ T \colon [0,\infty) \to 2^{[0,\infty)}$ is left-continuous.
 \end{itemize}

 Then $\vec F \circ T$ is constant.
 \end{lem}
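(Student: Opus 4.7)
The plan is to argue by contradiction: assume $G := \vec F \circ T$ is not constant and derive a contradiction from the hypotheses. Since $T$ is nondecreasing in set inclusion, so is $G$. If $G$ is not constant then by monotonicity there exists a value $t \in [0,\infty)$ and a minimal threshold $x^{*} \ge 0$ such that $t \in G(y)$ for every $y > x^{*}$ but $t \notin G(y)$ for $y < x^{*}$. Left-continuity of $G$ at $x^{*}$ should moreover force $t \notin G(x^{*})$: otherwise $t$ would have to be captured as a topological limit of points lying in $\bigcup_{y < x^{*}} G(y)$, and exploiting the prescribed topology on $2^{[0,\infty)}$ together with the monotone structure of $G$ one should rule this out.

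The contradiction I would aim for then comes from the right-continuity of $T$. For a nondecreasing closed-valued $T$, right-continuity at $x^{*}$ says $T(x^{*}) = \bigcap_{y > x^{*}} T(y)$, so choosing $d_{y} \in T(y) \cap F^{-1}(t)$ for each $y > x^{*}$ produces a nested family of nonempty closed subsets of $D$. A limit point $d^{*}$ of this family would lie in $T(x^{*})$ (by closedness of each $T(y)$ combined with right-continuity) and satisfy $F(d^{*}) = t$ (by continuity of $F$), giving $t \in G(x^{*})$, which is the desired contradiction.

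The main obstacle is producing such a limit point, because $F$ is not assumed proper and so $F^{-1}(t)$ need not be compact. This is exactly where the Hausdorff, $\sigma$-compact structure of $D$ must be used: writing $D = \bigcup_{n} K_{n}$ with $K_{n}$ compact and increasing, I would try to show that for some $n$ and some $z > x^{*}$ the entire family $\{d_{y} : x^{*} < y \le z\}$ can be localized inside $K_{n}$, after which the finite intersection property inside the compact set $K_{n}$ closes the argument. If this localization fails---every $K_{n}$ is eventually avoided as $y \to x^{*+}$---I expect one has to derive a direct contradiction with the left-continuity of $G$ via a diagonal or exhaustion argument on the $K_{n}$, using the fact that values of $F$ along such an escaping family cannot accumulate to $t$ in a way consistent with $t$ appearing freshly at $x^{*}$. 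This compactness/localization step is what I view as the genuinely delicate core of the proof; everything before and after is bookkeeping built from monotonicity, continuity of $F$, and the two one-sided continuity hypotheses.
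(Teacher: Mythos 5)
Your skeleton is essentially the paper's: pick a value $t$ occurring in some $G(b)$ (where $G:=\vec F\circ T$), consider the threshold $x^*=\inf\{s\colon t\in G(s)\}$, rule out the case where the threshold is attained by left-continuity of $G$ (the paper's Cases 2--3), and rule out the case where it is not attained by right-continuity of $T$ together with a compactness argument (the paper's Case 1). The first half of your plan is fine and matches the paper. The genuine gap is in Case 1: you never produce the limit point $d^*$. You correctly note that $\sigma$-compactness of $D$ does not make the closed sets $T(z)\cap F^{-1}(t)$ compact, you propose to localize the witnesses $d_y$ inside one member $K_n$ of a compact exhaustion, and, if localization fails, you only ``expect'' a diagonal/exhaustion argument to contradict left-continuity. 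No such argument is supplied, and none can be at this level of generality: the escaping scenario you worry about is a real obstruction, not bookkeeping. Indeed, take $D=[0,\infty)$, $F\equiv 1$, $T(x)=\emptyset$ for $x\le 1$ and $T(x)=[1/(x-1),\infty)$ for $x>1$: then $T$ is nondecreasing with closed values and right-continuous (since $\bigcap_{\alpha>1}T(\alpha)=\emptyset=T(1)$), and $\vec F\circ T$ is left-continuous, being $\emptyset$ on $[0,1]$ and $\{1\}$ on $(1,\infty)$ --- yet it is not constant. A variant with all $T(x)$ nonempty (adjoin the point $0$ to every $T(x)$ and take $F(u)=\abs{\sin u}$) behaves the same way. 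So the step you yourself flag as ``the genuinely delicate core'' is precisely where extra compactness must be imported as a hypothesis; it cannot be conjured by localization or a diagonal argument from the stated assumptions.

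For comparison, the paper closes this step bluntly: it places the witnesses $x_n\in T(a+\tfrac1n)\cap F^{-1}(y)$ into $D_0:=T(a+1)\cap F^{-1}(y)$, asserts that $D_0$ is compact ``as $D$ is $\sigma$-compact'', and extracts a convergent subsequence whose limit lies in $T(a)\cap F^{-1}(y)$ by right-continuity of $T$ and continuity of $F$. Your instinct that this inference needs more than $\sigma$-compactness was sound: a closed subset of a $\sigma$-compact space is only $\sigma$-compact, as the example above shows. In the intended application the difficulty is harmless, because means are internal, so one may restrict everything to $J^p$ for compact subintervals $J\subseteq I$ and work in a compact $D$; but as a proof of the lemma exactly as stated, your proposal stalls at the unproved localization step, and that step is not fixable without strengthening the hypotheses (e.g.\ $D$ compact, $T$ compact-valued, or $F$ proper on the sets $T(a)$).
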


\begin{proof}
Denote briefly $G:=\vec F \circ T$. Since $T$ has a limit $T(\infty):=\bigcup_{x \in [0,\infty)} T(x)$, one can extend the domain of to $G$ the set $[0,\infty]$.
As $T$ is nondecreasing (in the inclusion ordering), so is $G$. Thus it suffices to show that $G(\infty) \subseteq G(0)$. To this end, take $y \in G(\infty)$ arbitrarily. 
Define
\Eq{*}{
S_y:= \{ s \in [0,+\infty) \colon T(s) \cap F^{-1}(y) \ne \emptyset \}.
}

First let us rewrite definition of $S_y$ in two equivalent forms
\Eq{Sy_a}{
S_y= \{ s \in [0,+\infty) \colon y \in \vec F \circ T(s) \}
= \{ s \in [0,+\infty) \colon y \in G(s) \}.
}
Now it is sufficient to show that $0 \in S_y$.  First observe that for all $s_0 \in S_y$ and $s>s_0$ we have $T(s_0) \subseteq T(s)$ and thus $s\in S_y$.

Second, since $y \in G(\infty)$, there exists $u \in D$ and $a \in [0,+\infty)$ such that 
$y=F(u)$ and $u \in T(a)$. In particular $u \in T(a) \cap F^{-1}(y)$ and, consequently, $a \in S_y$. In particular $S_y$ is a nonempty interval with the right endpoint equal to infinity. We have three cases:

\medskip 

\noindent {\sc Case 1.} $S_y=(a,+\infty)$ for some $a \in [0,+\infty)$. 

Then one can take a sequence $(x_n)_{n=1}^\infty$ of elements in $D$ such that 
\Eq{*}{
x_n \in T(a+\tfrac1n) \cap F^{-1}(y) \qquad \text{ for all }n \in \N.
}
Then all $x_n$-s belong to the closed space $D_0:=T(a+1) \cap F^{-1}(y)$. However, as $D$ is $\sigma$-compact, we obtain that $D_0$ is compact. 
Thus the sequence $(x_n)_{n=1}^\infty$ contains a subsequence $(x_{n_k})_{k=1}^\infty$ which is convergent to $\tilde x \in D_0$.

Obviously $\tilde x$ belongs to the topological limit of $\big(T(a+\tfrac1n) \cap F^{-1}(y)\big)_{n=1}^\infty$, that is $\tilde x \in T(a) \cap F^{-1}(y)$. This implies that $a \in S_y$, which is the contradiction.

\medskip
\noindent {\sc Case 2.} $S_y=[a,+\infty)$ for some $a \in (0,+\infty)$. In this case, by the one-sided continuity, we have 
\Eq{*}{
y \in \vec F \circ T(a) = \bigcup_{x \in [0,a)} \vec F \circ T(x). 
}
Thus there exists $x_0 \in [0,a)$ such that $y \in \vec F \circ T(x_0)$. Whence $x_0 \in S_y$ contradicting our assumption.

\medskip
\noindent {\sc Case 3.} $S_y=[0,+\infty)$. In this final case we have $0 \in S_y$, which by \eq{Sy_a} implies $y \in G(0)$.

\medskip 

Finally, as $y$ is an arbitrary element of $G(\infty)$ we get the inclusion $G(\infty)\subseteq G(0)$. Consequently the equality $G(0)=G(\infty)$ is valid, which completes the proof.
\end{proof}

Now we can proceed to the main result of this paper, which was announced in the abstract.

\begin{thm}\label{thm:main}
Let $p \in \N$ and $\MM \colon I^p \to I^p$ be a contractive mean-type mapping. Then there exists at most one continuous $\MM$-invariant mean.
\end{thm}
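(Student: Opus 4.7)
The plan is to apply Lemma~\ref{lem:const} to the pointwise difference of two candidate invariant means. Suppose $K_1,K_2\colon I^p\to I$ are continuous $\MM$-invariant means, and set $F:=|K_1-K_2|\colon I^p\to[0,\infty)$. Then $F$ is continuous, and $F\circ\MM=F$ since $K_j\circ\MM=K_j$ for $j=1,2$. I would take $D:=I^p$, which is Hausdorff and $\sigma$-compact (as $I\subseteq\R$), and stratify $D$ by the oscillation functional by setting
\[
T(x):=\{v\in I^p:\max v-\min v\le x\},\qquad x\in[0,\infty).
\]
Each $T(x)$ is a preimage of a closed set under a continuous map, hence closed, and $T$ is clearly nondecreasing; right-continuity $\bigcap_{y>x}T(y)=T(x)$ is immediate from the definition.

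The crux is left-continuity of $G:=\vec F\circ T$, i.e.\ $G(a)=\bigcup_{x<a}G(x)$ for every $a>0$, and this is exactly where contractivity of $\MM$ enters. The inclusion $\supseteq$ is trivial from monotonicity. For $\subseteq$, pick $v\in T(a)$. If $\max v-\min v<a$, then $v$ itself lies in some $T(x)$ with $x<a$. Otherwise $\max v-\min v=a>0$, so $v$ is nonconstant, and contractivity yields $\max\MM(v)-\min\MM(v)<a$; that is, $\MM(v)\in T(x)$ for some $x<a$, and $\MM$-invariance gives $F(\MM(v))=F(v)$, so the value $F(v)$ is realised already in $G(x)$. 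This is the single place where the hypothesis on $\MM$ is used, and it is the step I expect to be the main conceptual obstacle: one must recognise that the oscillation $\max v-\min v$ is precisely the quantity contractivity shrinks, and that $F\circ\MM=F$ lets us trade $v$ for $\MM(v)$ without disturbing the fibre of $F$.

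Once these hypotheses are checked, Lemma~\ref{lem:const} asserts that $G$ is constant on $[0,\infty)$, and in fact on $[0,\infty]$ after the extension used in its proof. Since $T(0)=\{(t,\ldots,t):t\in I\}$ consists of constant tuples and every mean returns $t$ on $(t,\ldots,t)$, we have $F\equiv 0$ on $T(0)$, so $G(0)=\{0\}$. Constancy of $G$ then forces $G(\infty)=\{0\}$, and since $T(\infty)=\bigcup_{x\ge 0}T(x)=I^p$ this means $F\equiv 0$, i.e.\ $K_1=K_2$. The remaining bookkeeping (verifying $\sigma$-compactness, closedness of $T(x)$, and the routine two-sided inclusion in the left-continuity step) is straightforward and I would only sketch it.
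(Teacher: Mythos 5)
Your proposal is correct and follows essentially the same route as the paper: the same $F=|K_1-K_2|$, the same stratification $T(a)=\{v:\max v-\min v\le a\}$, the same use of contractivity plus $\MM$-invariance to get left-continuity of $\vec F\circ T$, and the same conclusion via Lemma~\ref{lem:const} and the diagonal $T(0)$. Your explicit case split (oscillation $<a$ versus $=a$) is a slightly tidier handling of the nonconstancy needed to invoke contractivity than the paper's wording, but the argument is the same.
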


\begin{proof}
 Let $K_1,K_2 \colon I^p \to I$ be continuous $\MM$-invariant means. We show that $K_1=K_2$.
 
 Define $F \colon I^p \to [0,+\infty)$ and $T \colon [0,+\infty) \to 2^{I^p}$ by 
 \Eq{*}{
 F(v)&:=\abs{K_1(v)-K_2(v)}, \\
 T(a)&:=\{v \in I^p \colon \max(v)-\min(v) \le a \}.
 }
 
 Then $T$ in nondecreasing and right-continuous, that is 
 \Eq{*}{
 \bigcap_{\alpha>a} T(\alpha)=T(a)\text{ for all }a>0.
 }
 Furthermore each $T(a)$ is a closed subset of $I^p$.
 
 Now we show that $\vec F \circ T$ is left-continuous. To this end, take $a>0$ arbitrarily. Obviously, since $T$ is monotone, one gets 
 \Eq{*}{
 \vec F \circ T(a^-) := \vec F \Big( \bigcup_{\alpha\in[0,a)} T_\alpha \Big) \subseteq \vec F \circ T(a).
 }
 For the converse implication take any element $y \in \vec F \circ T(a)$. Then there exists a nonconstant vector $x \in T(a) \subseteq I^p$ such that $y=F(x)$. Then, since $F$ is $\MM$-invariant, we also have $y=F \circ \MM(x)$. However $\MM$ is contractive, thus
 \Eq{*}{
 \max (\MM(x))-\min (\MM(x)) < \max(x)-\min(x) \le a. 
 }
 Consequently $\MM(x) \in T(a^-)$, and $y=F \circ \MM(x) \in \vec F \circ T(a^-)$.
 
 This yields $\vec F \circ T(a)=\vec F \circ T(a^-)$ and $\vec F \circ T$ is left-continuous.
 
According to Lemma~\ref{lem:const} function $\vec F \circ T$ is constant. In particular
\Eq{*}{
\vec F (I^n) = \bigcup_{a>0} \vec F \circ T(a)=\vec F \circ T(0)=\vec F \big(\big\{(x,\dots,x)\colon x\in I\big\}\big)=\{0\}.
}
Thus $F \equiv 0$. Therefore $K_1(v)=K_2(v)$ for all $v \in I^p$.
\end{proof}

\section{Applications}
\subsection{Weakly contractive mean-type mappings}
This section extends some consideration contained in \cite{MatPas21}. 
We say that a mean-type mapping $\mathbf{M}\colon I^{p}\rightarrow I^{p}$ is 
\emph{weakly contractive} if for every nonconstant vector $v\in I^{p}$ there
is a positive integer $n_0\left( v\right) $ such that 
\begin{equation*}
\max (\mathbf{M}^{n}(v))-\min (\mathbf{M}^{n}(v))<\max (v)-\min (v) \qquad \text{ for all }n \ge n_0(v).
\end{equation*}

Let us emphasize that it is sufficient to verify if the inequality above is valid for $n=n_0(v)$. 
Moreover in a special case $p=2$ it was proved \cite{MatPas20a} that $%
\mathbf{M}$ is weakly contractive if and only if $\mathbf{M}^{2}$ is
contractive. However, due to \cite{MatPas21}, for every $p>2$ we can construct
weakly contractive mean-type mapping on $I^p$ such that the function $I^p \ni v \mapsto n_0(v)$ is unbounded. 

Now recall a sufficient condition to guarantee the uniqueness of invariant mean.
\begin{prop}[\cite{MatPas21}, Theorem 2]
If $\mathbf{M}:I^p\to I^p$ is a continuous, weakly contractive mean-type mapping then there exists a unique $\mathbf{M}$-invariant mean $K\colon I^p\rightarrow I$. Moreover, the sequence of iterates $\left(\mathbf{M}^n\right) _{n\in \mathbb{N}}$ converges (pointwise on $I^{p})$ to $\mathbf{K}:=\left( K,\dots,K\right)$.
\end{prop}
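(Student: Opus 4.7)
The plan is to build the invariant mean $K$ explicitly as the pointwise limit of the iterates $\MM^n(v)$, deduce its continuity from a Dini-type argument, and then obtain uniqueness as a by-product of the convergence itself rather than by reapplying Theorem~\ref{thm:main}.

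The heart of the argument is to show that, for every $v\in I^p$, the diameter $\alpha_n(v):=\max \MM^n(v)-\min \MM^n(v)$ tends to $0$. The sequence is already nonincreasing because each component of $\MM$ is a mean, so it has a limit $\beta\ge 0$; I would argue by contradiction that $\beta=0$. If $\beta>0$, the orbit lies in the compact box $[\min v,\max v]^p$, producing a subsequential limit $w$ with diameter equal to $\beta>0$; in particular $w$ is nonconstant, weak contractivity supplies $m:=n_0(w)$ with $\alpha(\MM^m(w))<\beta$, and continuity of $\MM^m$ and of the diameter function propagates this strict inequality to an open neighbourhood $U$ of $w$. Since $\MM^{n_k}(v)\to w$ along a subsequence, eventually $\MM^{n_k}(v)\in U$, so $\alpha_{n_k+m}(v)<\beta$, contradicting $\alpha_n\ge \beta$.

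Once $\alpha_n(v)\to 0$ is in hand, the monotone sequences $n\mapsto \min \MM^n(v)$ and $n\mapsto \max \MM^n(v)$ are squeezed to a common limit $K(v)$, every coordinate of $\MM^n(v)$ converges to $K(v)$, and $K$ is internal; invariance follows from an index shift in the limit. For continuity, I would apply Dini's theorem to the sequence $(\alpha_n)$: continuous, nonincreasing, and converging pointwise to the continuous limit $0$, hence uniformly convergent on every compact subset of $I^p$. From this, the first-coordinate iterates $M_1\circ \MM^{n-1}$ become uniformly Cauchy on compacta (any two of them with indices $\ge N+1$ differ by at most $\alpha_N(v)$), so $K$ is a locally uniform limit of continuous functions and hence continuous. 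Finally, for uniqueness, any $\MM$-invariant mean $K'$ satisfies $K'(v)=K'(\MM^n(v))$, and internality of $K'$ squeezes the right-hand side between $\min \MM^n(v)$ and $\max \MM^n(v)$, both tending to $K(v)$; so $K'=K$ with no regularity required of $K'$.

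The main obstacle is the compactness-plus-continuity step inside the diameter argument. Weak contractivity guarantees only eventual strict decrease after a vector-dependent number of steps $n_0(v)$, which may be unbounded in $v$, so one cannot simply iterate a uniform contraction of $\alpha$. The trick is to trade uniformity in $v$ for uniformity on a neighbourhood of the subsequential limit $w$, which makes essential use of the hypothesis that $\MM$ is continuous; without that continuity there is no way to pull the strict contraction at $w$ back to the tail of the orbit.
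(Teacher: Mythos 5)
Correct, and essentially complete. Note, however, that there is nothing in this paper to compare it with: the proposition is imported verbatim from [MatPas21] (Theorem 2) and the paper gives no proof of it, so your argument serves as a self-contained substitute for the citation. What you write is the classical Borwein--Borwein-type compactness argument adapted to weak contractivity, and all the steps check out: the diameter $\alpha_n(v)=\max\MM^n(v)-\min\MM^n(v)$ is nonincreasing by internality; if its limit $\beta$ were positive, a subsequential limit $w$ of the orbit (which lies in the compact box $[\min v,\max v]^p\subseteq I^p$, so $w\in I^p$ --- worth saying explicitly) has diameter exactly $\beta$ by continuity of the diameter, and the strict inequality at the single point $w$ furnished by weak contractivity is propagated to a neighbourhood by continuity of $\MM^{n_0(w)}$, contradicting $\alpha_n\ge\beta$; this is precisely the right way to circumvent the fact that $n_0(\cdot)$ may be unbounded, and it is the only place where continuity of $\MM$ is used in an essential way. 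The Dini argument for continuity of $K$ and the squeeze argument for uniqueness are also sound; in particular your uniqueness proof shows more than the statement requires, namely that $K$ is the unique invariant mean among \emph{all} (not necessarily continuous) invariant means, which is consistent with how the proposition is phrased and explains why, for continuous $\MM$, Theorem~\ref{thm:main} is not needed at all. The only cosmetic caveat is the final passage from uniform convergence on compacta to continuity of $K$: one should note that every point of $I^p$ has a compact neighbourhood in $I^p$ (or argue via sequences), but this is routine.
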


Now we extend Theorem~\ref{thm:main} to the family of all weakly contractive mean-type mappings.

\begin{cor}\label{cor:main}
Let $p \in \N$ and $\MM \colon I^p \to I^p$ be a weakly contractive mean-type mapping. Then there exists at most one continuous $\MM$-invariant mean.
\end{cor}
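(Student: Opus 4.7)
The plan is to repeat the proof of Theorem~\ref{thm:main} almost verbatim, isolating the single place where the hypothesis of contractivity was actually invoked and showing that weak contractivity suffices there. Concretely, let $K_1, K_2 \colon I^p \to I$ be two continuous $\MM$-invariant means, and reuse the definitions
\Eq{*}{
F(v):=\abs{K_1(v)-K_2(v)}, \qquad T(a):=\{v \in I^p \colon \max(v)-\min(v)\le a\}.
}
The map $T$ is nondecreasing, right-continuous, and closed-valued by the same topological reasoning as before; none of these facts depends on the dynamics of $\MM$. The only ingredient that must be reproved is that $\vec F \circ T$ is left-continuous.

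For that step, I would argue as follows. Fix $a>0$ and take $y\in \vec F\circ T(a)$, so $y=F(x)$ for some $x\in T(a)$. If $x$ is constant then $F(x)=0$, and $0\in \vec F\circ T(a^-)$ trivially since every constant vector lies in $T(0)\subseteq T(\alpha)$ for all $\alpha<a$. If $x$ is nonconstant, then by weak contractivity there is $n_0=n_0(x)\in\N$ with
\Eq{*}{
\max(\MM^{n_0}(x))-\min(\MM^{n_0}(x))<\max(x)-\min(x)\le a,
}
so $\MM^{n_0}(x)\in T(a^-)$. Crucially, $F$ is $\MM$-invariant (since both $K_1$ and $K_2$ are), hence also $\MM^{n_0}$-invariant, so $F(\MM^{n_0}(x))=F(x)=y$, giving $y\in \vec F\circ T(a^-)$. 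This is exactly the replacement for the one-step contractivity argument in Theorem~\ref{thm:main}.

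With left-continuity in hand, Lemma~\ref{lem:const} applies verbatim and yields that $\vec F\circ T$ is constant. Evaluating at $a=0$ (where $T(0)$ consists of the diagonal vectors, on which any mean equals the common coordinate, so $F$ vanishes) shows $\vec F\circ T\equiv \{0\}$, whence $F\equiv 0$ and $K_1=K_2$. The main obstacle I anticipated—namely that weak contractivity does not give contractivity of any fixed iterate $\MM^n$ (by the unbounded-$n_0$ example cited from \cite{MatPas21})—is sidestepped precisely because we are allowed to choose $n_0$ depending on the individual point $x$, which is all the proof of left-continuity needs.
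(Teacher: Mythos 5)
Your argument is correct: weak contractivity is only needed at the single point of the proof of Theorem~\ref{thm:main} where contractivity was used, namely to push a given $x\in T(a)$ into $T(a^-)$ without changing the value of $F$, and since $F=\abs{K_1-K_2}$ is invariant under every iterate $\MM^n$, replacing $\MM(x)$ by $\MM^{n_0(x)}(x)$ does exactly this; your separate treatment of constant vectors (where $F$ vanishes and $0\in\vec F\circ T(a^-)$ via $T(0)$) closes the only loose end, and Lemma~\ref{lem:const} then applies unchanged. The paper takes a different, shorter route: it keeps Theorem~\ref{thm:main} as a black box and applies it to the auxiliary mapping $\MM^*(v):=\MM^{n_0(v)}(v)$, which is a contractive mean-type mapping, after observing that every $\MM$-invariant mean is automatically $\MM^*$-invariant (since $K\circ\MM^n=K$ for all $n$). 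The underlying idea is the same in both proofs --- the exponent may be chosen pointwise, which is precisely what defeats the unbounded-$n_0$ obstruction you mention --- but the paper's reduction avoids re-running the topological argument at the cost of checking that $\MM^*$ is a (generally discontinuous) contractive mean-type mapping, whereas your version is self-contained, never forms the possibly awkward object $\MM^*$, and makes transparent exactly where the dynamical hypothesis enters the left-continuity of $\vec F\circ T$. Either way the conclusion $F\equiv 0$, i.e.\ $K_1=K_2$, follows as you state.
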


\begin{proof}
 Since $\MM$ is weakly contractive, the mapping $\MM^* \colon I^p \to I^p$ given by $\MM^*(v):=\MM^{n_0(v)}(v)$ is contractive.
 Furthermore for every $\MM$-invariant mean $K \colon I^p \to I$, $v \in I^p$ and $n \in \N$ we have
$K \circ \MM^n(v)=K(v)$. For $n:=n_0(v)$, this equality simplifies to $K \circ \MM^*(v)=K(v)$. Thus every $\MM$-invariant mean is also $\MM^*$-invariant.

However, since $\MM^*$ is contractive, Theorem~\ref{thm:main} implies that there exists at most one continuous, $\MM^*$-invariant mean. Consequently there exists at most one continuous $\MM$-invariant mean.
\end{proof}

\subsection{An applications in solving a functional equation}
In this section, similarly to \cite{MatPas20a}, we show a simple application of our results in solving functional equations
\begin{thm}
Let $\mathbf{M}\colon I^{p}\rightarrow I^{p}$ be a weakly-contractive mean-type mapping
such that there exists a continuous $\MM$-invariant mean $K:I^{p}\rightarrow I$. 

A continuous function $F:I^{p}\rightarrow \mathbb{R}$ is $\MM$-invariant 
if, and only if, there is a continuous function $\varphi\colon I \to \R$ such that $F=\varphi \circ K$.
\end{thm}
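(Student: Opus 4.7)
The ``if'' direction is immediate: for a continuous $\varphi \colon I \to \R$ and an $\MM$-invariant $K$, one has $(\varphi \circ K) \circ \MM = \varphi \circ (K \circ \MM) = \varphi \circ K$. For the ``only if'' direction I would begin by reducing to the contractive case exactly as in the proof of Corollary~\ref{cor:main}: the mapping $\MM^*(v) := \MM^{n_0(v)}(v)$ is contractive, every $\MM$-invariant function is $\MM^*$-invariant, and $K$ remains a continuous $\MM^*$-invariant mean. So one may assume $\MM$ itself is contractive.

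Next, I would construct the candidate function on the diagonal. Set $\varphi(x) := F(x, x, \dots, x)$ for $x \in I$; continuity of $F$ yields continuity of $\varphi$. Let $G := \varphi \circ K$. Then $G$ is continuous and $\MM$-invariant (using $K \circ \MM = K$), and since $K$ is a mean one has $K(x,\dots,x) = x$, so $G(x,\dots,x) = \varphi(x) = F(x,\dots,x)$. Thus $F$ and $G$ agree on the diagonal of $I^p$, and the whole problem reduces to proving $F \equiv G$ on all of $I^p$.

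To prove that equality I would mirror the proof of Theorem~\ref{thm:main} with the auxiliary function $\tilde F \colon I^p \to [0,\infty)$ defined by $\tilde F(v) := |F(v) - G(v)|$, and with the same set-valued map $T(a) := \{v \in I^p : \max(v) - \min(v) \le a\}$. Since both $F$ and $G$ are continuous and $\MM$-invariant, $\tilde F$ is continuous and $\MM$-invariant as well. The verification that $T$ is nondecreasing, right-continuous with closed values in the Hausdorff $\sigma$-compact space $I^p$ is verbatim the same, and left-continuity of $\vec{\tilde F} \circ T$ also carries over: for any nonconstant $v \in T(a)$, contractivity of $\MM$ gives $\MM(v) \in T(a^-)$, while $\MM$-invariance of $\tilde F$ gives $\tilde F(\MM(v)) = \tilde F(v)$. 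Lemma~\ref{lem:const} then yields that $\vec{\tilde F} \circ T$ is constant, and evaluating at $a = 0$, where $T(0)$ is the diagonal on which $\tilde F$ vanishes, forces this constant value to be $\{0\}$. Hence $\tilde F \equiv 0$ and $F = \varphi \circ K$.

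The main obstacle is conceptual rather than technical: Theorem~\ref{thm:main} and Corollary~\ref{cor:main} assert uniqueness of continuous invariant \emph{means}, but here $F$ is only assumed to be a continuous $\MM$-invariant function, not a mean, so neither result can be quoted as a black box. The key observation is that the argument in the proof of Theorem~\ref{thm:main} never exploited the mean property of $K_1, K_2$ beyond continuity and $\MM$-invariance of the nonnegative function $|K_1 - K_2|$; once one manufactures the substitute $G := \varphi \circ K$ that is continuous, $\MM$-invariant, and matches $F$ on the diagonal, the same argument applied to $\tilde F = |F - G|$ closes the proof.
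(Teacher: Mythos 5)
Your proof is correct, but it takes a genuinely different route from the paper's. The paper does not reduce to the contractive case and does not reuse Lemma~\ref{lem:const}: it defines the same $\varphi(x)=F(x,\dots,x)$, supposes $F\ne\varphi\circ K$ somewhere, forms the closed set $E=\{v\in I^p\colon \abs{\varphi\circ K(v)-F(v)}\ge\varepsilon\}$, notes that $\MM(E)\subseteq E$ and that $E$ contains no constant vectors, picks $v_0\in E$ of minimal diameter as in \eq{v_0min}, and then contradicts weak contractivity applied to $v_1=\MM^{n_0(v_0)}(v_0)$. You instead pass to $\MM^*(v)=\MM^{n_0(v)}(v)$ exactly as in Corollary~\ref{cor:main} and rerun the Theorem~\ref{thm:main} argument on $\tilde F=\abs{F-\varphi\circ K}$, correctly observing that that argument never uses the mean property of $K_1,K_2$ beyond continuity, invariance, and the vanishing of the difference on the diagonal; this is the key conceptual point and you identify it explicitly. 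Your route is a bit longer but buys robustness: all topology is delegated to Lemma~\ref{lem:const}, whose $\sigma$-compactness hypothesis handles non-compact $I$ cleanly, whereas the paper's step \eq{v_0min} asserts that the infimum of $\max(v)-\min(v)$ over the closed set $E$ is attained, which needs an additional compactness-type justification when $I$ is open or unbounded (closedness alone does not give attainment) --- a point your argument sidesteps entirely. The paper's approach, in turn, is shorter and self-contained, using weak contractivity directly without constructing $\MM^*$ or invoking the lemma a second time.
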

\begin{proof}
 The $(\Leftarrow)$ implication is trivial. To show the converse assume that $F \colon I^p \to \R$ satisfies $F \circ \MM =F$ and $K$ is the continuous \mbox{$\MM$-invariant} mean.
Define  $\varphi\colon I \to \R$ by
\Eq{*}{
\varphi(x):=F(x,\dots,x) \qquad (x\in I).
}
Assume to the contrary that $F(v) \ne \varphi \circ K (v)$ for some $v \in I^p$. Then there exists $\varepsilon\in(0,+\infty)$ such that the set
\Eq{*}{
E:=\{v\in I^p \colon \abs{\varphi \circ K(v)-F(v)}\ge \varepsilon\}
}
is nonempty. Since both $K$ and $F$ are $\MM$-invariant, we obtain
\Eq{*}{
\varepsilon \le \abs{\varphi \circ K(v)-F(v)}=\abs{\varphi \circ K\circ \MM(v)-F \circ \MM(v)} \text{ for all }v\in E,
}
thus $\MM(E) \subseteq E$.

Next, since $E$ is closed, there exists $v_0 \in E$ such that
\Eq{v_0min}{
\max(v_0)-\min(v_0)=\inf_{v \in E} \max(v)-\min(v).
}

As $E$ is does not contain constant vectors one gets 
\Eq{*}{
\max(v_0)-\min(v_0)>0.
}
As $\MM$ is weakly-contractive, let $v_1:=\MM^{n(v_0)}(v_0)$. Then
\Eq{*}{
&\quad \max(v_1)-\min(v_1)<\max(v_0)-\min(v_0)\\
\text{ and }&\quad v_1=\MM^{n(v_0)}(v_0) \in \MM^{n(v_0)}(E) \subseteq E,
}
contradicting \eq{v_0min}.

Therefore $F(v)=\varphi \circ K (v)$ for all $v \in I^p$.
\end{proof}

\end{document}